\documentclass[11pt]{amsart}
\usepackage{geometry, amssymb, dsfont, amsaddr}
\usepackage[all]{xy} 
\geometry{a4paper} 

\title{Non-commutative $q$-expansions}
\author{Mahesh Kakde}
\date{version 2}
\address{King's College London}
\email{mahesh.kakde@kcl.ac.uk}

\newtheorem{theorem}{Theorem}

\newtheorem{proposition}[theorem]{Proposition}
\newtheorem{remark}[theorem]{Remark}

\newtheorem{definition}[theorem]{Definition}
\newtheorem{corollary}[theorem]{Corollary}

\setcounter{secnumdepth}{5}

\begin{document}

\maketitle

\begin{abstract} In this short note we partially answer a question of Fukaya and Kato by constructing a $q$-expansion with coefficients in a non-commutative Iwasawa algebra whose constant term is a non-commutative $p$-adic zeta function. 

\end{abstract}

\tableofcontents

\section*{Notations and Set up} We use the following notation and set up throughout the paper. Fix an odd prime $p$. For a pro-finite group $G$ we define the Iwasawa algebra $\Lambda(G):= \varprojlim_U \mathbb{Z}_p[G/U]$, where $U$ runs through open normal subgroups of $G$. If $G$ is a compact $p$-adic Lie group with a closed normal subgroup $H$ such that $G/H \cong \mathbb{Z}_p$, the additive group of $p$-adic integers then we have the canonical Ore set of \cite{CFKSV:2005} defined as 
\[
S := \{f \in \Lambda(G) : \Lambda(G)/\Lambda(G)f \text{ is a f.g. } \Lambda(H)-\text{module}\}.
\]
Put $\widehat{\Lambda(G)}_S$ for the $p$-adic completion of the localisation $\Lambda(G)_S$. \medskip

The extension $\mathbb{Q}(\mu_{p^{\infty}})$ of $\mathbb{Q}$ obtained by adjoining all $p$-power roots of 1 contains a unique extension of $\mathbb{Q}$ with Galois group isomorphic to $\mathbb{Z}_p$. We denote this extension by $\mathbb{Q}_{cyc}$, the cyclotomic $\mathbb{Z}_p$-extension of $\mathbb{Q}$. If $L$ is any number field, then the cyclotomic $\mathbb{Z}_p$-extension of $L$ is defined as $L_{cyc}:= L\mathbb{Q}_{cyc}$. For any number field $L$, the ring of integers of $L$ is denoted by $O_L$. \medskip

Throughout $F$ will denote a totally real number field of degree $r:=r_F := [F:\mathbb{Q}]$. Let $\Sigma := \Sigma_F$ denote a finite set of finite places of $F$. If $L$ is an extension of $F$, then we put $\Sigma_L$ for the set of places of $L$ above $\Sigma$. If there is no confusion we will often write $\Sigma$ for $\Sigma_L$. For any subset $O$ of $F$, we write $O^+$ for the set of totally positive elements of $O$. Throughout $F_{\infty}$ will denote a totally real Galois extension of $F$ such that 
\begin{enumerate}
\item $F_{cyc} \subset F_{\infty}$.
\item $F_{\infty}$ is unramified outside $\Sigma$. 
\item $G:= Gal(F_{\infty}/F)$ is a $p$-adic Lie group. 
\end{enumerate}

We put $A_F(G)$ (often written simply as $A(G)$, where $F$ is clear from the context) for the ring $\widehat{\Lambda(G)}_S[[q]]$ of all formal power series 
\[
a_0 + \sum_{\mu \in O_F^+} a_{\mu} q^{\mu}.
\]

\section{Introduction} The theory of $p$-adic modular forms essentially began with the paper of Serre \cite{serre:1973}. It was generalised by Katz \cite{Katz:1978} and Deligne-Ribet \cite{DeligneRibet:1980} and used to construct $p$-adic $L$-functions for $CM$ and totally real number fields respectively. The theory of $\Lambda$-adic modular forms was systematically developed by Hida. Since then they have formed a central tool in number theory and have most notably been used to prove main conjectures of commutative Iwasawa theory (Wiles \cite{Wiles:1990}, Skinner-Urban \cite{SkinnerUrban:2014} etc.). The main conjecture of non-commutative Iwasawa theory was formulated by Coates-Fukaya-Kato-Sujatha-Venjakob \cite{CFKSV:2005} for elliptic curves without complex multiplication and more generally in Fukaya-Kato \cite{FukayaKato:2006}. In an unpublished manuscript Kato \cite{Kato:2006} proved a case of non-commutative main conjecture for totally real fields by computing $K_1(\Lambda(G))$ and $K_1(\Lambda(G)_S)$ for a certain group $G$ and then proving congruences between certain between abelian $p$-adic zeta functions by proving the congruences first between $\Lambda$-adic Hilbert Eisenstein series. Abelian $p$-adic zeta functions appear in constant terms of these Eisenstein series (see theorem \ref{DR}). At the end of the paper Kato mentions the following question of Fukaya - Is there a $\Lambda$-adic modular form, with non-commutative ring $\Lambda$, whose constant term is the non-commutative $p$-adic $L$-function. We cannot answer this question completely but we do construct a $q$-expansion (in certain cases; see theorem \ref{theoremmain} for a precise statement) whose constant term is a non-commutative $p$-adic zeta function. The evaluation of this $q$-expansion at Artin characters is closely related to Hilbert Eisenstein series (see corollary \ref{corollaryevaluation}). 

The content of the article are as follows: in section \ref{sectionlambdaadicseries} we recall the result of Deligne and Ribet on Hilbert Eisenstein series. In section \ref{sectionmwcongruences} we prove the M\"{o}bius-Wall congruences for the Eisenstein series from section \ref{sectionlambdaadicseries}. As well as giving a slight generalisation of the congruences proven by Ritter-Weiss \cite{RitterWeiss:2010} this section simplifies the exposition. As usual the congruences are actually proven directly for non-constant coefficients of the standard $q$-expansion of the Eisenstein series in theorem \ref{DR}. The congruence for the constant terms, i.e. $p$-adic zeta functions,  can then be deduced from the $q$-expansion principal for Hilbert modular forms. These congruences are used in \cite{RitterWeiss:2010}, \cite{Kakde:2013} (generalising \cite{Kato:2006}) to construct non-commutative $p$-adic zeta function and prove the main conjecture for totally real number fields. In any case, we get the M\"{o}bius-Wall congruences for the $\Lambda$-adic Eisenstein series in theorem \ref{DR}. In section \ref{sectionk1} we give a description of $K_1(A_{\mathbb{Q}}(G))$ for certain $G$ (see \ref{theoremcongruences} for details). For simplicity we work only over $\mathbb{Q}$ but the result should hold over other totally real number fields. In section \ref{sectionqexpansion} we use this description along with the M\"{o}bius-Wall congruences for $\Lambda$-adic Eisenstein series to construct an element in $K_1(A_{\mathbb{Q}}(G))$ whose constant term equals the non-commutative $p$-adic zeta function.

\section{$\Lambda$-adic modular Eisenstein series} \label{sectionlambdaadicseries} In this section we assume that $G$ is commutative i.e. $F_{\infty}/F$ is an abelian extension. Recall the following result of Deligne and Ribet. 

\begin{theorem}[Deligne-Ribet \cite{DeligneRibet:1980}, theorem 6.1] \label{DR} There exists a $\Lambda(G)$-adic $F$ Hilbert modular Eisenstein series $\mathcal{E}(F_{\infty}/F)$ with standard $q$-expansion given by 
\[
2^{-r}\zeta(F_{\infty}/F) + \sum_{ \mu \in O_F^+} \left( \sum_{\mu \in \mathfrak{a}} \frac{\sigma_{\mathfrak{a}}}{N_F\mathfrak{a}} \right) q^{\mu},
\]
where $\zeta(F_{\infty}/F)$ is the $p$-adic zeta function, $\mathfrak{a}$ runs through all  ideals of $O_F$ coprime to $\Sigma$, $\sigma_{\mathfrak{a}} \in G$ is the Artin symbol of $\mathfrak{a}$, $N_F\mathfrak{a} \in \mathbb{Z}_p$ is the norm of $\mathfrak{a}$ and $q^{\mu} = e^{2\pi i tr_{F/\mathbb{Q}}(\mu)}$.

In particular, for any finite order character $\chi$ of $G$ and any positive integer $k$ divisible by $p-1$, the evaluation of $\mathcal{E}(F_{\infty}/F)$ at $\chi \kappa^k$ (here $\kappa$ is the cyclotomic character of $F$) has standard $q$-expansion
\[
2^{-r}L_{\Sigma}(\chi, 1-k) + \sum_{\mu \in O_F^+} \left(  \sum_{\mu \in \mathfrak{a}}\chi(\sigma_{\mathfrak{a}})N_F\mathfrak{a}^{k-1} \right) q^{\mu}. 
\]

\end{theorem}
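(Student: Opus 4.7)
My plan is to produce $\mathcal{E}(F_\infty/F)$ by $p$-adically interpolating the classical Hilbert Eisenstein series attached to the finite-order characters of $G$, and then read off $\zeta(F_\infty/F)$ as the constant term via the Hilbert $q$-expansion principle. For each finite-order character $\chi$ of $G$ (factoring through an open quotient $G/U$) and each positive integer $k$ divisible by $p-1$, Hecke's classical construction gives a holomorphic Hilbert Eisenstein series $E_k(\chi)$ of parallel weight $k$ over $F$ whose standard $q$-expansion at $\infty$ is exactly
\[
2^{-r} L_\Sigma(\chi, 1-k) + \sum_{\mu \in O_F^+} \Big( \sum_{\substack{\mu \in \mathfrak{a} \\ (\mathfrak{a}, \Sigma) = 1}} \chi(\sigma_{\mathfrak{a}}) \, N_F \mathfrak{a}^{k-1} \Big) q^{\mu},
\]
whose constant term is a $p$-integer (up to the harmless factor $2^{-r}$) by Siegel's rationality theorem for partial zeta values.

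The non-constant coefficients interpolate essentially for free into $\Lambda(G)$: for each $\mu \in O_F^+$, the condition $\mu \in \mathfrak{a}$ forces $\mathfrak{a}$ to divide the principal ideal $(\mu)$, so (assuming $p \in \Sigma$, which I may do without loss) only finitely many $\mathfrak{a}$ contribute, and each $N_F\mathfrak{a}$ is a $p$-adic unit. Hence
\[
b_\mu := \sum_{\substack{\mu \in \mathfrak{a} \\ (\mathfrak{a}, \Sigma) = 1}} \frac{\sigma_\mathfrak{a}}{N_F\mathfrak{a}}
\]
is a genuine finite sum in $\Lambda(G)$, and the identity $\kappa(\sigma_\mathfrak{a}) = N_F\mathfrak{a}$ shows that the image of $b_\mu$ under the ring map $\Lambda(G) \to \overline{\mathbb{Q}}_p$ induced by $\chi\kappa^k$ is precisely the $\mu$-th classical coefficient of $E_k(\chi)$.

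The hard step is the constant term. The mechanism is the geometric $q$-expansion principle for Hilbert modular forms on the Hilbert-Blumenthal Shimura variety, in the form used by Deligne and Ribet: a $p$-adic Hilbert modular form is essentially determined by its non-constant $q$-expansion, so any $p$-adic congruences satisfied by the non-constant coefficients of the family $\{E_k(\chi)\}_{\chi,k}$ propagate to the same congruences on the constant terms $2^{-r} L_\Sigma(\chi, 1-k)$. Since my family $\{b_\mu\}$ already lives coherently in $\Lambda(G)$, the required congruences come for free, producing the sought element $\zeta(F_\infty/F) \in \widehat{\Lambda(G)}_S$ interpolating $L_\Sigma(\chi, 1-k)$ under $\chi\kappa^k$. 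The main obstacles are twofold: tracking the denominators in the constant term, which is exactly the reason for passing from $\Lambda(G)$ to the Ore-localised completion $\widehat{\Lambda(G)}_S$; and avoiding the naive pitfall of arguing purely with formal $q$-expansions, which is why one must work with the intrinsic notion of $p$-adic Hilbert modular form in the Katz/Deligne-Ribet sense and exploit integrality on the Shimura variety. Assembling everything, the formal series $\mathcal{E}(F_\infty/F) := 2^{-r}\zeta(F_\infty/F) + \sum_\mu b_\mu q^\mu$ is the desired $\Lambda(G)$-adic Eisenstein series, and the specialisation formula at $\chi\kappa^k$ is then immediate from the pointwise identities established above.
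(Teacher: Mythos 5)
The paper does not prove this statement — it is recalled verbatim with a citation to Deligne--Ribet, so there is no proof in the paper for your attempt to deviate from. Your sketch is a faithful high-level outline of the strategy of the cited source: realize the non-constant coefficients as visibly coherent finite sums in $\Lambda(G)$ (finiteness because $\mu \in \mathfrak{a}$ forces $\mathfrak{a} \mid (\mu)$, unit denominators because $p \in \Sigma$), check the specialisation at $\chi\kappa^k$ via $\kappa(\sigma_{\mathfrak{a}}) = N_F\mathfrak{a}$, and then invoke the geometric $q$-expansion principle on the Hilbert--Blumenthal moduli space to force the constant term into $\widehat{\Lambda(G)}_S$. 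You correctly flag the constant-term step as the hard one, though your one-sentence gloss hides substantial work in Deligne--Ribet: the precise form of the $q$-expansion principle (comparing constant terms at several unramified cusps when non-constant coefficients vanish modulo $p^n$), the $p$-integrality of the normalised constant terms, and the pole structure that dictates the passage to the Ore-localised completion. As a proof proposal for a result the paper itself treats as a black box, this is accurate and well calibrated.
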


\begin{proposition} If $\beta \in O_F^+$ divisible only by primes in $\Sigma$, then there exists a Hecke operator $U_{\beta}$ such that the action of $U_{\beta}$ on the standard $q$-expansion of $\Lambda(G)$-adic forms is as follows: if the standard $q$-expansion of $f$ is 
\[
c_0 + \sum_{\mu \in O_F^+} c(\mu) q^{\mu},
\]
then the standard $q$-expansion of $f|_{U_{\beta}}$ is 
\[
c_0 + \sum_{\mu \in O_F^+} c(\beta\mu) q^{\mu}.
\]
\end{proposition}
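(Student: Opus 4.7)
The plan is to construct $U_\beta$ first at the level of classical Hilbert modular forms, verify its effect on standard $q$-expansions by direct Fourier analysis, and then transport the operator to the $\Lambda(G)$-adic setting via the $q$-expansion principle underlying Deligne--Ribet's construction. Fixing an integer $k \geq 2$ divisible by $p-1$, I would work with classical Hilbert modular forms of a level $\Gamma$ whose conductor $\mathfrak{n}$ is divisible by all primes in $\Sigma$, so that in particular $(\beta) \mid \mathfrak{n}$. Following the standard double-coset construction, the coset $\Gamma \bigl(\begin{smallmatrix} 1 & 0 \\ 0 & \beta \end{smallmatrix}\bigr) \Gamma$ decomposes as $\bigsqcup_{a \in O_F/(\beta)} \Gamma \bigl(\begin{smallmatrix} 1 & a \\ 0 & \beta \end{smallmatrix}\bigr)$, and this defines a Hecke operator $U_\beta$ on classical forms of level $\Gamma$ and weight $k$. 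The hypothesis that $\beta$ is supported on primes of $\Sigma$ is exactly what is needed for $U_\beta$ to preserve the level structure.

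Next I would verify the effect on the standard $q$-expansion by direct computation. Up to a normalising scalar in the weight-$k$ slash action, $U_\beta$ acts as
\[
f|_k U_\beta(z) \;=\; \frac{1}{N_F(\beta)} \sum_{a \in O_F/(\beta)} f\!\left( \frac{z+a}{\beta} \right).
\]
Expanding $f(z) = c_0 + \sum_{\mu \in O_F^+} c(\mu) e^{2\pi i \operatorname{tr}_{F/\mathbb{Q}}(\mu z)}$ and applying character orthogonality on $O_F/(\beta)$, the sum over $a$ selects exactly those indices $\mu$ with $\beta \mid \mu$; after reindexing $\mu \mapsto \beta\mu$ one obtains $f|_k U_\beta = c_0 + \sum_{\mu \in O_F^+} c(\beta\mu) q^\mu$, as desired. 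The constant term is manifestly preserved because $\beta \cdot 0 = 0$.

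Finally, to pass to $\Lambda(G)$-adic forms, I would use that any such form is characterised by its formal $q$-expansion and specialises at each pair $(\chi, \kappa^k)$ to a classical modular form of weight $k$ and character $\chi$ (this is the content of the Deligne--Ribet theory feeding Theorem \ref{DR}). Since the classical $U_\beta$ is defined uniformly in $k$ and $\chi$ and commutes with specialisation, the formal rule $\sum c(\mu) q^\mu \mapsto c_0 + \sum c(\beta\mu) q^\mu$ preserves the class of $\Lambda(G)$-adic forms and gives the desired operator. The main subtlety in the argument is bookkeeping: one must check that the weight-dependent normalising factors in the slash action are harmless on $p$-integral $q$-expansions (so the operator interpolates $p$-adically over $\Lambda(G)$) and that a common level $\Gamma$ can be fixed once and for all. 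The $q$-expansion principle is what makes this gluing rigorous.
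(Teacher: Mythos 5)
The paper proves this proposition by citation alone (it points the reader to Ritter--Weiss, ``Congruences between abelian pseudomeasures'', Lemma 6), so there is no in-paper argument to compare against. Your construction --- double-coset decomposition $\Gamma\bigl(\begin{smallmatrix}1&0\\0&\beta\end{smallmatrix}\bigr)\Gamma = \bigsqcup_{a \in O_F/(\beta)} \Gamma\bigl(\begin{smallmatrix}1&a\\0&\beta\end{smallmatrix}\bigr)$ at the classical level, computation of the effect on the standard $q$-expansion by character orthogonality on $O_F/(\beta)$, and transport to $\Lambda(G)$-adic forms via specialisation at $(\chi,\kappa^k)$ together with the $q$-expansion principle --- is exactly the standard argument that the cited reference gives, and it is correct. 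One cosmetic point: the constant term is preserved not really because ``$\beta\cdot 0 = 0$'', but because in the Fourier computation the $|O_F/(\beta)| = N_F(\beta)$ identical summands contributed by $c_0$ cancel the $1/N_F(\beta)$ normalisation; and your flagged worry about weight-dependent normalising factors is, for the $U$-operator specifically, benign, since unlike $T_{\mathfrak{p}}$ the operator $U_\beta$ has no $N_F(\beta)^{k-1}$ term in its $q$-expansion formula and so interpolates over $\Lambda(G)$ without modification.
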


\begin{proof} See \cite[lemma 6]{RitterWeiss:congruences}. 
\end{proof}

Let $K$ be a subfield of $F$. Then the Hilbert modular variety of $K$ can be diagonally embedded in that of $F$. Restricting Hilbert modular forms on $F$ along this diagonal gives Hilbert modular forms over $K$. We denote this map by $Res_{F/K}$. 

\begin{proposition} If the standard $q$-expansion of $f$ is $c_0+ \sum_{\mu \in O_F^+} c(\mu)q^{\mu}$, then the standard $q$-expansion of $Res_{F/K}(f)$ is 
\[
c_0 + \sum_{\eta \in O_K^+} \left( \sum_{ \mu : tr_{F/K}(\mu) = \eta} c(\mu) \right) q^{\eta},
\]

\end{proposition}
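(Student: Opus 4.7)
The strategy is to compute the action of $Res_{F/K}$ on a single exponential $q^\mu$ and then regroup the resulting series by the common value of $tr_{F/K}(\mu)$. Concretely, I would realise the formal symbol $q^\mu$ analytically as $q^\mu = \prod_{j \in S_F} e^{2\pi i \, j(\mu) \tau_j}$, where $S_F$ denotes the set of real embeddings of $F$ and $(\tau_j)_{j\in S_F}$ are the coordinates on the Hilbert modular variety of $F$. The diagonal embedding of the $K$-variety into the $F$-variety is, on coordinates, the substitution $\tau_j \mapsto \tau_{j|_K}$, with the $K$-coordinates indexed by the set $S_K$ of real embeddings of $K$.

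Under this substitution, each embedding $i \in S_K$ receives contributions from the embeddings $j \in S_F$ with $j|_K = i$, so the exponent becomes
\[
\sum_{j \in S_F} j(\mu) \tau_{j|_K} = \sum_{i \in S_K} \left( \sum_{j \in S_F,\; j|_K = i} j(\mu) \right) \tau_i = \sum_{i \in S_K} i(tr_{F/K}(\mu)) \, \tau_i,
\]
using the standard identity $i(tr_{F/K}(\mu)) = \sum_{j|_K=i} j(\mu)$. Thus $Res_{F/K}$ sends the $F$-exponential $q^\mu$ to the $K$-exponential $q^{tr_{F/K}(\mu)}$. Applying this term-by-term to the standard $q$-expansion of $f$ and then collecting terms according to the value $\eta = tr_{F/K}(\mu) \in O_K$ gives immediately the claimed formula, with the constant term $c_0$ preserved.

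Two small points need to be verified. First, if $\mu \in O_F^+$ is totally positive, then each $i(tr_{F/K}(\mu)) = \sum_{j|_K=i} j(\mu)$ is a sum of positive real numbers, hence $tr_{F/K}(\mu) \in O_K^+$; in particular $\eta$ really ranges over $O_K^+$ (taking the empty inner sum to be $0$ for those $\eta$ not of the form $tr_{F/K}(\mu)$). Second, for each fixed $\eta \in O_K^+$ the bound $0 < j(\mu) < (j|_K)(\eta)$ confines $\mu$ to a bounded region of the lattice $O_F$, so the inner sum is finite and the rearrangement into the $\eta$-indexed form is legitimate. This finiteness is the only non-formal input, but it is immediate from totally positivity; there is no serious obstacle in the argument, which is essentially a direct unwinding of the definition of $Res_{F/K}$.
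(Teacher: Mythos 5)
Your argument is correct and is the standard verification; the paper itself states this proposition without proof, treating it as a routine unwinding of the definition of restriction along the diagonal embedding, which is exactly what you carry out. Your two checks at the end are the right things to worry about and you handle them correctly: total positivity of $tr_{F/K}(\mu)$ follows from the identity $i(tr_{F/K}(\mu))=\sum_{j|_K=i}j(\mu)$, and the finiteness of the fibre $\{\mu\in O_F^+: tr_{F/K}(\mu)=\eta\}$ follows because that identity, together with $j(\mu)>0$, traps each $j(\mu)$ in the interval $(0,(j|_K)(\eta))$, confining $\mu$ to a bounded region of the lattice $O_F\hookrightarrow F\otimes\mathbb{R}$.

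Two very minor points of presentation. First, the paper's shorthand $q^{\mu}=e^{2\pi i\,tr_{F/\mathbb{Q}}(\mu)}$ in Theorem~\ref{DR} suppresses the coordinates $\tau$ on $\mathfrak{h}^{S_F}$; your explicit rendering $q^{\mu}=\prod_{j\in S_F}e^{2\pi i\,j(\mu)\tau_j}$ is the precise version, and it is worth saying explicitly that this is what the shorthand means before computing with it. Second, you should note that $c_0$ is indeed preserved: the constant term corresponds to $\mu=0$ (equivalently $\eta=0$), which is outside the range $O_F^+$ (resp.\ $O_K^+$) and is therefore untouched by the substitution, so no collapsing of nonconstant terms into the constant term can occur.
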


\section{The M\"{o}bius-Wall congruences for the Eisenstein series} \label{sectionmwcongruences} In this section we assume that $G$ is a $p$-adic Lie group. 
Let 
\[
S^o(G) := \{U : U \text{ is an open subgroup of } G\}
\]
Put $F_U := F_{\infty}^U$, the field fixed by $U$ and put $K_U := F_{\infty}^{[U,U]}$, the field fixed by the commutator subgroup of $U$. Therefore $Gal(K_U/F_U) = U^{ab}$, the abelianisation of $U$. For $V, U \in S^o(G)$, with $V \subset U$, the transfer homomorphism $ver: U^{ab} \rightarrow V^{ab}$ induces a ring homomorphism 
\[
ver : A(U^{ab}) \rightarrow A(V^{ab}), 
\]
which is identity on the coefficients and $q$. If $V$ is a normal subgroup of $U$, then we can define a map 
\[
\sigma^U_V : A(V^{ab}) \rightarrow A(V^{ab}),
\]
given by 
\[
x \mapsto \sum_{g \in U/V} gxg^{-1}.
\]
Recall the definition of M\"{o}bius function on finite groups. It takes value 1 on the trivial group and then defined recursively as 
\[
\sum_{P' \subset P} \mu(P') = 0.
\]

\begin{theorem} \label{MWC} For every $V, U \in S^o(G)$, with $V$ a normal subgroup of $U$ we put
\[
\mathcal{G} := \sum_{V \subset W \subset U} \mu(W/V) ver\left(Res_{F_W/F_U}\left(\mathcal{E}(K_W/F_W)\right)|_{U_{[U:W]}}\right).
\]
Then the standard $q$-expansion of $\mathcal{G}$ lies in $Im(\sigma^U_V)$.
\end{theorem}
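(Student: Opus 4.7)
The plan is to verify the theorem one coefficient at a time in the standard $q$-expansion. Combining Theorem \ref{DR} with the formulas for $Res_{F_W/F_U}$ and the Hecke operator $U_{[U:W]}$, the coefficient of $q^\eta$ (for $\eta \in O_{F_U}^+$) in $\mathcal{G}$ is
\[
\sum_{V \subset W \subset U} \mu(W/V)\, ver\!\left( \sum_{\substack{\mu \in O_{F_W}^+ \\ \mathrm{tr}_{F_W/F_U}(\mu) = [U:W]\eta}}\ \sum_{\substack{\mathfrak{a} \subset O_{F_W},\ (\mathfrak{a},\Sigma)=1 \\ \mu \in \mathfrak{a}}} \frac{\sigma_{\mathfrak{a}}}{N_{F_W}\mathfrak{a}} \right),
\]
with an analogous expression for the constant term in terms of $2^{-r_{F_W}}\,ver(\zeta(K_W/F_W))$. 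It suffices to show that each such coefficient lies in $\sigma^U_V(\widehat{\Lambda(V^{ab})}_S)$.

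The main step is to reparametrize the inner data uniformly in $W$. Since $F_W \subset F_V$, an element $\mu \in O_{F_W}^+$ is just an element of $O_{F_V}^+$ fixed by $W$, and an ideal $\mathfrak{a} \subset O_{F_W}$ corresponds bijectively to the $W/V$-invariant ideal $\mathfrak{A} := \mathfrak{a}\,O_{F_V}$ of $O_{F_V}$; the trace condition becomes the $W$-independent condition $\mathrm{tr}_{F_V/F_U}(\mu) = [U:V]\eta$. Using the standard class-field-theoretic compatibility of the transfer $ver$ with Artin symbols (essentially $ver(\sigma_\mathfrak{P}^{W^{ab}}) = \prod_{\mathfrak{p} \mid \mathfrak{P}} \sigma_{\mathfrak{p}}^{V^{ab}}$) together with multiplicativity of the norm, the summand $ver(\sigma_\mathfrak{a}/N_{F_W}\mathfrak{a})$ for a given ``downstairs datum'' $(\mu, \mathfrak{A})$ takes a fixed value $y_{(\mu, \mathfrak{A})} \in \widehat{\Lambda(V^{ab})}_S$ independent of $W$. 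Swapping the order of summation, the coefficient becomes
\[
\sum_{(\mu, \mathfrak{A})} \left( \sum_{V \subset W \subset W_\ast} \mu(W/V) \right) y_{(\mu, \mathfrak{A})},
\]
where $W_\ast := \mathrm{Stab}_U(\mu, \mathfrak{A}) \supseteq V$ and the outer sum runs over downstairs data satisfying the trace and coprimality conditions. By the defining recursion of $\mu$, the parenthesized factor equals $\delta_{W_\ast, V}$, so only downstairs data with trivial $U/V$-stabilizer contribute; these come in $U/V$-orbits of size $|U/V|$, and summing the contributions over a single orbit gives precisely $\sigma^U_V(y_{(\mu, \mathfrak{A})})$ at the orbit representative, using Galois equivariance $y_{(g\mu,g\mathfrak{A})} = g\cdot y_{(\mu,\mathfrak{A})}\cdot g^{-1}$.

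The principal obstacle is the class-field-theoretic $W$-independence of $y_{(\mu,\mathfrak{A})}$, which requires the transfer–Artin compatibility above, careful treatment of the decomposition/inertia of primes in the tower $F_U \subset F_W \subset F_V \subset K_V$, and a check that the norm factors $N_{F_W}\mathfrak{a}$ recombine correctly as $W$ varies. The constant term is handled in the same spirit: the term $2^{-r_{F_W}}\,ver(\zeta(K_W/F_W))$ is also $U$-equivariant (the $U$-conjugation action permutes the pairs $(K_W, F_W)$ and the associated $p$-adic zeta functions), so the identical M\"obius collapse isolates $\sigma^U_V$-images. Once the $W$-independence is in place, the remainder of the argument is the purely combinatorial M\"obius identity.
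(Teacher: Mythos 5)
Your reparametrization in terms of downstairs data $(\mu,\mathfrak{A})$ on $F_V$ matches the paper's set $M$, and the orbit/Galois-equivariance structure you set up is correct. But the crucial claim that the summand $ver(\sigma_{\mathfrak{a}}/N_{F_W}\mathfrak{a})$ depends only on the downstairs datum and \emph{not on $W$} is false, and this is exactly where all the genuine work in the theorem lives. If $\mathfrak{a}$ is an ideal of $O_{F_W}$ and $\mathfrak{A}=\mathfrak{a}O_{F_V}$, then $N_{F_V}\mathfrak{A}=(N_{F_W}\mathfrak{a})^{[F_V:F_W]}$, so $N_{F_W}\mathfrak{a}=(N_{F_V}\mathfrak{A})^{1/[W:V]}$; the transfer-Artin compatibility takes care of $\sigma_{\mathfrak{a}}\mapsto\sigma_{\mathfrak{A}}$, but the norm denominator visibly depends on $W$ through the exponent $1/[W:V]$. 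The paper writes the rewritten sum precisely as $\sum_W \mu(W/V)\sum_{(\alpha,\mathfrak{a})\in M^W} \sigma_{\mathfrak{a}}/(N_{F_V}\mathfrak{a})^{1/[W:V]}$, keeping this $W$-dependence explicit.

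Because of that dependence, the bare M\"obius vanishing $\sum_{V\subset W\subset W_\ast}\mu(W/V)=\delta_{W_\ast,V}$ is not what finishes the proof. After fixing an orbit with stabilizer $W_0$, the coefficient of $\sigma_{\mathfrak{a}}$ is $\sum_{V\subset W\subset W_0}\mu(W/V)\,(N_{F_{W_0}}\mathfrak{a})^{-[W_0:W]}$, a \emph{weighted} M\"obius sum, and what one must show is that this lies in $|W_0/V|\,\mathbb{Z}_p$. This is a nontrivial arithmetic fact about M\"obius functions of finite groups: for every finite group $P$ and every unit $r\in\mathbb{Z}_p$,
\[
\sum_{P'\subset P}\mu(P')\,r^{[P:P']}\equiv 0 \pmod{|P|\,\mathbb{Z}_p},
\]
which the paper proves using results of Hawkes--Isaacs--\"{O}zayd\i n (their Corollaries 3.9 and 4.9) together with the Teichm\"uller trick of replacing $r$ by the $(p-1)$st root of unity congruent to it. Your proposal replaces this by the trivial Kronecker-delta identity, so it does not establish the theorem; the ``purely combinatorial M\"obius identity'' you defer to is in fact the substantive step. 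A secondary remark: the constant term is not handled ``in the same spirit'' in the paper -- it is deduced separately from the $q$-expansion principle for Hilbert modular forms, not by a direct Galois-equivariance argument.
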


\begin{proof} We follow the proof in \cite[lemma 6]{RitterWeiss:2010}. Let $\mu \in O_{F_U}^+$. Then the $\mu$th coefficient of the standard $q$-expansion of $\mathcal{G}$ is 
\[
\sum_{V \subset W \subset U} \mu(W/V) ver \left( \sum_{(\alpha, \mathfrak{a})} \frac{\sigma_{\mathfrak{a}}}{N_{F_W}\mathfrak{a}} \right),
\]
where $\alpha$ in the second summation runs through all element of $O_{F_W}^+$ such that $tr_{F_W/F_U}(\alpha) = [U:W]\mu$ and $\mathfrak{a}$ runs through integral ideals of $F_W$ coprime to $\Sigma$ and containing $\alpha$. Take $M$ to be the set of all pairs $(\alpha, \mathfrak{a})$ with $\alpha \in O_{F_V}^+$ such that $tr_{F_V/F_U}(\alpha) = [U:V]\mu$ and $\mathfrak{a}$ an integral ideal of $F_V$ coprime to $\Sigma$ and containing $\alpha$. Then $U$ acts on $M$ and the above sum can be written as 
\[
\sum_{V  \subset W \subset U} \mu(W/V) \left( \sum_{(\alpha, \mathfrak{a}) \in M^W} \frac{ \sigma_{\mathfrak{a}}}{(N_{F_V}\mathfrak{a})^{1/[W:V]}}\right).
\]
Now fix $(\alpha, \mathfrak{a}) \in M$ and let $W_0$ be the stabiliser of $(\alpha, \mathfrak{a})$. Then the coefficient of $\sigma_{\mathfrak{a}}$ in the above sum is
\[
\sum_{V \subset W \subset W_0} \mu(W/V) \frac{1}{(N_{F_V}\mathfrak{a})^{1/[W:V]}} = \sum_{V \subset W \subset W_0} \mu(W/V) (N_{F_{W_0}}\mathfrak{a})^{-[W_0:W]}.
\]
We get the coefficient of $\sigma_{g(\mathfrak{a})}$ for every $g \in U/W_0$. Hence to show the congruence it suffices to show that for any finite group $P$ and any unit $r$ in $\mathbb{Z}_p$ we have
\begin{equation} \label{mobiuscongruence}
\sum_{P' \subset P} \mu(P')r^{[P:P']} \equiv 0 (\text{mod } |P|\mathbb{Z}_p).
\end{equation}
We use \cite[corollary 3.9]{HawkesIsaacsOzaydin:1989}. Let $|P| = p^k\cdot t$ with $t$ an integer co-prime to $p$. Let $t'$ be a divisor of $t$. By taking $n = p^k\cdot t'$, the subgroup $H$ to be the identity we deduce from \emph{loc. cit.} that $\sum_{|P'|} \mu(P')$ is divisible by $p^k$, where $P'$ runs through all subgroups of $P$ whose order divides $p^k \cdot t'$. Since this holds for arbitrary $t'$ we deduce that the sum $\sum_{P'} \mu(P')$ is divisible by $p^k$, where $P'$ runs through all subgroups of $P$ whose order is divisible by $t'$ and divides $p^k \cdot t'$. Now by \cite[corollary 4.9]{HawkesIsaacsOzaydin:1989} we have that $p^{k'}$ divides $p \cdot \mu(P')$, where $p^{k'}$ is the largest power of $p$ dividing $|P'|$. Therefore $\mu(P') r^{[P:P']} \equiv \mu(P')z^{t'} (\text{mod } |P| \mathbb{Z}_p)$ for any subgroup $P'$ of $P$ or order $p^{k'} \cdot t'$ and where $z$ is the $(p-1)$st root of $1$ in $\mathbb{Z}_p$ congruence to $r$ modulo $p$. Therefore
\[
\sum_{P'} \mu(P') r^{[P:P']} \equiv z^{t'}(\sum_{P'} \mu(P')) \equiv 0 (\text{mod } |P|\mathbb{Z}_p),
\]
where the $P'$ runs through all subgroups $P'$ of $P$ whose is divisible by $t'$ and divides $p^k \cdot t'$. This proves congruence in equation (\ref{mobiuscongruence}) and hence the theorem.

\end{proof}

\begin{remark} We may replace $\mathcal{G}$ by $Res_{F_U/F}(\mathcal{G})$ and the conclusion of the theorem still clearly holds. Though this is not important here, in cases of Eisenstein series over other groups (i.e. other than $GL_2$) this may be useful because there are cases when $q$-expansion principal may be known to hold over $F$ but not over extensions of $F$. 
\end{remark}

\section{$K_1$ of some Iwasawa algebras} \label{sectionk1} Detailed proofs of results in this section will appear in \cite{BurnsKakde:algebraic}. From now on we also assume that $F = \mathbb{Q}$. Let $G$ be a compact $p$-adic Lie group of the form $H \rtimes \Gamma$, where $H \cong \mathbb{Z}_p^d$ and $\Gamma$ is an open subgroup of $\mathbb{Z}_p^{\times}$ and containing $1+p\mathbb{Z}_p$. Furthermore, we assume that the action of $\Gamma$ on $H$ is diagonal. Put $\Gamma_0 := \Gamma$ and $\Gamma_i := 1+p^i \mathbb{Z}_p \subset \mathbb{Z}_p^{\times}$ for $i \geq 1$. We put $\delta := [\Gamma: \Gamma_1]$. Put $G_i := H \rtimes \Gamma_i$ for $i \geq 0$. Let $\mathcal{A}(G)$ be the free abelian group generated by absolutely irreducible finite order (Artin) representations of $G$. Then we have a natural map 
\[
Det: K_1(A(G)) \rightarrow Hom(\mathcal{A}(G), A(\Gamma)^{\times}).
\]
Define $SK_1(A(G)):= Ker(Det)$. Put $K_1'(A(G)) := K_1(A(G))/SK_1(A(G))$. 

\begin{remark} We expect $SK_1(A(G))$ to be trivial in this case but make no attempt to prove it here. This would be in analogy with the fact that $SK_1(\Lambda(G))$ is trivial (\cite[proposition 12.7]{Oliver:1988}). 
\end{remark}

\begin{definition} Define a map 
\[
\theta := \prod_{i \geq 0} \theta_i : K_1'(A(G)) \rightarrow \prod_{i \geq 0} A(G_i^{ab})^{\times},
\]
where each $\theta_i$ is the composition $K_1'(A(G)) \rightarrow K_1'(A(G_i)) \rightarrow A(G_i^{ab})^{\times}$ of the norm map and the natural surjection. 
\end{definition}

\noindent{\bf Some more maps:} Let $ 0 \leq j \leq i$. We have two natural maps
\[
N:= N_{i,j} : A(G_j^{ab})^{\times} \rightarrow A(G_i/[G_j, G_j])^{\times}
\]
and the natural projection
\[
\pi := \pi_{i,j} : A(G_i^{ab}) \rightarrow A(G_i/[G_j, G_j]).
\]
We have the transfer homomorphism $ver: G_j^{ab} \rightarrow G_i^{ab}$ which induces a ring homomorphism, again denoted by $ver$
\[
ver: A(G_j^{ab}) \rightarrow A(G_i^{ab}),
\]
which acts as identity on $q$. We have a $\mathbb{Z}_p$-linear map from section \ref{sectionmwcongruences}
\[
\sigma_i := \sigma^G_{G_i} : A(G_i^{ab}) \rightarrow A(G_i^{ab}),
\]
The image of the map $\sigma_i$ lies in the subring $A(G_i^{ab})^{G}$, the part fixed by $G$. In fact, the image $\sigma_i$ is an ideal in this ring (but not in the ring $A(G_i^{ab})$). 

\begin{definition} Let $\tilde{\Phi} \subset \prod_{i \geq 0} \left(A(G_i^{ab})^{\times}\right)^G$ consisting of all tuples $(x_i)_i$ satisfying the congruence
\begin{itemize}
\item[(C)] $ver(x_{i-1}) \equiv x_i (\text{mod } Im(\sigma_i))$
\end{itemize}
\end{definition}

\begin{definition} Let $\Phi \subset \tilde{\Phi}$ consisting of all tuples $(x_i)_i$ satisfying the functoriality 
\begin{itemize}
\item[(F)] For all $0 \leq j \leq i$
\[
N_{i,j}(x_j) = \pi_{i,j}(x_i).
\] 
\end{itemize} 
\end{definition}

We define one more map before stating the main theorem of this section. We define $\eta_0 : A(G_0^{ab}) \rightarrow A(G_0^{ab})$ to be 
\[
\eta_0(x) = \frac{x^{\delta}}{\prod_{k=0}^{\delta -1} \tilde{\omega}^k(x)},
\]
where $\omega$ is a character of $G^{ab}$ inflated from an order $\delta$ character of $\Gamma$ and $\tilde{\omega}$ is the map induced by $g \mapsto \omega(g)g$. For every $i \geq 1$, we define 
\[
\eta_i : A(G_i^{ab}) \rightarrow A(G_i^{ab})
\]
by 
\[
x \mapsto \frac{x^p}{\prod_{k=0}^{p-1} \tilde{\omega}_i^k(x)},
\]
where $\omega_i$ is a character of $G_i^{ab}$ inflated from an order $p$ character of $\Gamma_i$ and $\tilde{\omega}_i$ is the map on $A(G_i^{ab})$ induced by $g \mapsto \omega_i(g)g$. We put 
\[
\eta := \prod_{i \geq 0} \eta_i : \prod_{i \geq 0} A(G_i^{ab}) \rightarrow A(G_i^{ab})
\]

\begin{theorem} \label{theoremcongruences} 
\begin{enumerate}
\item $\theta$ induces an isomorphism between $K_1'(A(G))$ and $\Phi$. 
\item The inclusion $\Phi \hookrightarrow \tilde{\Phi}$ has a natural section.
\end{enumerate}

\end{theorem}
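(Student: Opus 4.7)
The strategy mirrors the Kato--Kakde description of $K_1$ of non-commutative Iwasawa algebras $\Lambda(G)$ for groups of the given form, carried out in \cite{Kato:2006, Kakde:2013}. Since $\theta_i$, $N_{i,j}$, $\pi_{i,j}$, $ver$, $\sigma_i$ and $\eta_i$ all act as the identity on the formal variable $q$ and extend coefficient-by-coefficient from the corresponding maps on $\widehat{\Lambda(\cdot)}_S$, the entire proof is a formal extension of the Iwasawa-algebra case to formal power series in $q$; the nontrivial congruences are identical to those arising in the classical case.

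For part (1), the assertions that $\theta$ is well-defined into $\Phi$ are verified as follows. Each $\theta_i(y)$ is $G$-fixed because $G_i$ is normal in $G$, so inner $G$-conjugation becomes trivial on the abelianisation. The functoriality (F) is the standard compatibility of $K$-theoretic norms with projections. The congruence (C) is the main content: applying the integral logarithm $L \colon K_1'(A(G_i)) \to A(G_i^{ab}) \otimes \mathbb{Q}_p$ translates (C) into an additive congruence which is precisely the content of the group-theoretic M\"obius calculation carried out at the end of the proof of Theorem \ref{MWC}. Injectivity of $\theta$ on $K_1'$ is then automatic: every absolutely irreducible Artin representation of $G$ factors through $G/[G_i, G_i]$ for some $i$, so $Det$ (which is injective on $K_1'$ by definition) is determined by $\theta$. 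Surjectivity of $\theta$ onto $\Phi$ is the main obstacle: given $(x_i) \in \Phi$, one takes $p$-adic logarithms, uses (F) and (C) to assemble $(\log x_i)_i$ into an element of $A(G^{ab}) \otimes \mathbb{Q}_p$ lying in the image of $L$, and then inverts $L$. The technical crux is pinning down the image of $L$ precisely in this split-extension case, which is the content of \cite{BurnsKakde:algebraic}.

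For part (2), the section $s \colon \tilde\Phi \to \Phi$ is constructed via the $\eta_i$. The $\omega_i$-twist averaging built into $\eta_i$ makes $\eta_i$ interact correctly with the ideal $Im(\sigma_i)$: namely, the discrepancy $ver(x_{i-1})/x_i \in 1 + Im(\sigma_i)$ permitted by $\tilde\Phi$ becomes controllable under $\eta_i$, so one can inductively adjust each $x_i$ by an $\eta_i$-correction built from $x_{i-1}$ to force (F). Verifying that the adjusted tuple still lies in $\tilde\Phi$, and that $s$ restricts to the identity on $\Phi$, reduces to routine manipulation of the definitions of $ver$, $N_{i,j}$, $\pi_{i,j}$ and $\sigma_i$; this is the most delicate bookkeeping of the argument.
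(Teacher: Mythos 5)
Your overall strategy tracks the paper's sketch: injectivity via every irreducible Artin representation being induced from a one-dimensional representation of some $G_i$, surjectivity via the integral logarithm and the $\eta_i$-twisted construction, and the section built from the $\eta_i$. Both your proposal and the paper defer the substantial technical work to \cite{BurnsKakde:algebraic}. That said, two points deserve scrutiny.

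First, you claim that verifying (C) for the image of $\theta$ ``translates into an additive congruence which is precisely the content of the group-theoretic M\"obius calculation carried out at the end of the proof of Theorem~\ref{MWC}.'' This conflates two distinct inputs. The congruence~(\ref{mobiuscongruence}) at the end of that proof, $\sum_{P'\subset P}\mu(P')r^{[P:P']}\equiv 0 \ (\text{mod } |P|\mathbb{Z}_p)$, is the \emph{analytic} input controlling $p$-divisibility of the norm sums appearing in the Eisenstein series coefficients. The fact that elements in the image of $\theta$ satisfy (C) is a purely $K$-theoretic property of $A(G)$, true for any group of the prescribed form independently of any modular forms; it follows from the behaviour of the integral logarithm under transfer and trace, not from~(\ref{mobiuscongruence}). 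The two congruences are thematically parallel (both are M\"obius-flavoured), but the paper deliberately keeps them separate: Theorem~\ref{MWC} supplies the Eisenstein side and Theorem~\ref{theoremcongruences} the algebraic side, and Theorem~\ref{theoremmain} is where the two are matched. Second, your descriptions ``inverts $L$'' for surjectivity and ``inductively adjust each $x_i$ by an $\eta_i$-correction'' for the section omit what the paper identifies as the real content: the explicit element $X=\prod_{i\geq 1}\eta_i(y_i)^{1/((p-1)p^i)}$ (resp.\ $z_i=\prod_{j\geq i}\eta_j(y_j)^{1/p^{j-i+1}}$) with $y_i=x_i/\mathrm{ver}(x_{i-1})$, and the nontrivial assertions that $\eta_i(y_i)$ is a $(p-1)p^i$-th power in $\overline{K}_1(A(G),J)$ with a unique root there, and that the infinite product converges. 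Naming these as the crux, rather than gesturing at ``inverting $L$,'' would make the proposal track the actual argument.
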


\begin{proof} We give a rough idea of the proof with details and more general results appearing in \cite{BurnsKakde:algebraic}. \\
(1) By definition of $K_1'$  and the fact that every irreducible Artin representation of $G$ is induced from a one dimensional Artin representation of $G_i$ for some $i$ (\cite[proposition 25]{Serre:representationtheory}) it is clear that the map $\theta$ is injective.  It is also easy to show that the image of $\theta$ lies in $\Phi$. Let $(x_i)_i \in \Phi$. Then the inverse image of $(x_i)_i$ in $K_1'(A(G))$ is constructed as follows: firstly we may and do assume that $x_0=1$ since the map $K_1(A(G)) \rightarrow A(G_0^{ab})^{\times}$ is surjective. For every $i \geq 1$ put $y_i := \frac{x_i}{ver(x_{i-1})}$. Define 
\[
X := \prod_{i \geq 1} \eta_i(y_i)^{\frac{1}{(p-1)p^i}} \in K_1'(A(G)).
\]
There are several points that need explaining - firstly, $\eta_i(y_i)$ lies in $A(G_i^{ab})^{\times}$. However, it can be show that the map $K_1'(A(G_i)) \rightarrow A(G_i^{ab})^{\times} \xrightarrow{\eta_i} \eta_i(A(G_i^{ab})^{\times})$ splits and hence $\eta_i(y_i)$ makes sense as an element of $K_1'(A(G))$. It can be shown using conditions (F) and (C) that $\eta_i(y_i)$ is a $(p-1)p^i = p[G:G_i]$th power in $K_1'(A(G))$. Using the fact that $x_0=1$, we can show that $\eta_i(y_i)$ actually lies in the image, denoted by $\overline{K}_1(A(G),J)$ of $K_1(A(G),J)$ in $K_1(A(G))$ and that it has a unique $(p-1)p^i$th root in $\overline{K}_1(A(G),J)$. Here $J$ is the kernel of $A(G) \rightarrow A(G^{ab})$. Lastly, one needs to show that the infinite product converges. Each of this step is non-trivial and crucially uses integral logarithm map (or rather it generalisation to rings like $A(G)$) of R. Oliver and M. Taylor.   

(2) By the above we may define the natural section as follows: Let $(x_i)_i \in \tilde{\Phi}$ and we may again assume that $x_0=1$. Define $z_0=1$ and for $i \geq 1$ define
\[
z_i = \prod_{j \geq i} \eta_i(y_i)^{\frac{1}{p^{j-i+1}}} \in A(G_i^{ab})^{\times},
\]
with $y_i$ defined as above. Then one can check that $(z_i)_i \in \Phi$ and gives a natural section of the inclusion. 
\end{proof}

\begin{definition} We define the natural section of the inclusion $\Phi \subset \tilde{\Phi}$ given by the above theorem by $s$. 
\end{definition}

\begin{corollary} \label{corophitilde} If $(x_i)_i \in \tilde{\Phi}$, then there is a unique element $x \in K_1'(A(G))$ such that $\theta(x) = s((x_i)_i)$. 
\end{corollary}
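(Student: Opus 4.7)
The plan is to derive the corollary as an immediate formal consequence of Theorem \ref{theoremcongruences}, without any additional work beyond combining its two parts. The only inputs required are the existence of the natural section $s$ and the bijectivity of $\theta$ onto $\Phi$, both of which are supplied there.

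First, given a tuple $(x_i)_i \in \tilde{\Phi}$, I will apply the section $s: \tilde{\Phi} \to \Phi$ provided by part (2) of Theorem \ref{theoremcongruences} to obtain the element $s((x_i)_i) \in \Phi$. Recall that this section modifies each component $x_i$ by an infinite product of $(p^{j-i+1})$-th roots of $\eta_i(y_i)$, where $y_i = x_i / ver(x_{i-1})$, forcing the functoriality condition (F) to hold while preserving condition (C); in particular, evaluating $s$ on a tuple already in $\Phi$ is the identity.

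Next I will invoke part (1) of Theorem \ref{theoremcongruences}, which states that $\theta: K_1'(A(G)) \to \Phi$ is an isomorphism. Setting $x := \theta^{-1}(s((x_i)_i))$ then yields an element of $K_1'(A(G))$ with $\theta(x) = s((x_i)_i)$. Existence of $x$ is surjectivity of $\theta$ and uniqueness is its injectivity, so the corollary follows with no further effort.

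There is no real obstacle at this stage: the hard work, notably the use of the integral logarithm of Oliver and Taylor to construct preimages under $\theta$ and the verification that the modified tuple lands in $\Phi$, has already been carried out in Theorem \ref{theoremcongruences}. The corollary merely repackages these pieces into the form needed for the subsequent construction of the non-commutative $q$-expansion in Section \ref{sectionqexpansion}.
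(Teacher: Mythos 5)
Your proof is correct and is exactly the intended argument: compose the section $s$ from part (2) of Theorem \ref{theoremcongruences} with the inverse of the isomorphism $\theta$ from part (1), with existence following from surjectivity of $\theta$ onto $\Phi$ and uniqueness from its injectivity. The paper leaves this corollary without proof precisely because it is this immediate a consequence of the theorem.
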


\section{Non-commutative $q$-expansions} \label{sectionqexpansion} We continue with the notation of the previous section. Therefore $F = \mathbb{Q}$. Let $F_i := F_{\infty}^{G_i}$ and $K_i := F_{\infty}^{[G_i, G_i]}$. We put $\mathcal{E}_i$ for the standard $q$-expansion of the $\Lambda(G_i^{ab})$-adic Hilbert modular form $Res_{F_i/\mathbb{Q}}(\mathcal{E}(K_i/F_i))|_{U_{\delta p^i}}$ from section \ref{sectionlambdaadicseries}. As $F_1$ is an abelian extension of $\mathbb{Q}$ and $G_1$ is pro-$p$ we know by the theorem of Ferrero-Washington \cite{FerreroWashington:1979} that $\mathcal{E}_i \in A(G_i^{ab})^{\times}$ (it is enough to show that the constant term, i.e. the $p$-adic zeta functions $\zeta(K_i/F_i)$, of $\mathcal{E}_i$ are units in $\Lambda(G_i^{ab})_S$. This is well-known, for example see \cite[lemma 1.7 and 1.14]{Kakde:2011}).

\begin{theorem} \label{theoremmain} The tuple $(\mathcal{E}_i)_i$ lies in $\tilde{\Phi}$. Hence there exists $\mathcal{E} \in K_1'(A(G))$ such that $\theta(\mathcal{E}) = s((\mathcal{E}_i)_i)$. The ``constant term" of $\mathcal{E}$, i.e. its image under the map $K_1'(A(G)) \rightarrow K_1'(\widehat{\Lambda(G)}_S)$ mapping $q$ to 0, takes $\mathcal{E}$ to the $p$-adic zeta function for $F_{\infty}/F$. 
\end{theorem}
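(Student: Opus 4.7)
The plan is to verify that $(\mathcal{E}_i)_i \in \tilde{\Phi}$, then apply Corollary \ref{corophitilde} to produce $\mathcal{E}$, and finally identify its constant term with the $p$-adic zeta function. The unit property of each $\mathcal{E}_i$ has already been noted above (Ferrero-Washington). For $G$-invariance, $G_i$ is normal in $G$, so conjugation induces an action of $G$ on $A(G_i^{ab})$ which coincides with the geometric action of $G/G_i$ on the abelian extension $K_i/F_i$. Since the Deligne-Ribet series $\mathcal{E}(K_i/F_i)$ is defined canonically in terms of this extension, it is fixed by the action; the operations $Res_{F_i/\mathbb{Q}}$ and $|_{U_{\delta p^i}}$ act only on $q$ and preserve the invariance, so $\mathcal{E}_i \in (A(G_i^{ab})^\times)^G$.

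The main step is the congruence (C): $ver(\mathcal{E}_{i-1}) \equiv \mathcal{E}_i \pmod{Im(\sigma_i)}$. For $i \geq 2$, I would apply Theorem \ref{MWC} with $U = G_{i-1}, V = G_i$; as $G_{i-1}/G_i$ is cyclic of order $p$, the M\"{o}bius sum collapses to the two terms indexed by $W = G_i$ and $W = G_{i-1}$, and after applying $Res_{F_{i-1}/\mathbb{Q}}$ and $|_{U_{\delta p^{i-1}}}$ (which commute with $ver$ and fix the image ideal, since they act solely on $q$) the congruence becomes
\[
\mathcal{E}_i - ver(\mathcal{E}_{i-1}) \in Im(\sigma^{G_{i-1}}_{G_i}).
\]
The case $i = 1$ is analogous with $U = G$, $V = G_1$, the sum now ranging over the subgroups of $G/G_1$ of order dividing $\delta$. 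The principal obstacle is the upgrade from $Im(\sigma^{G_{i-1}}_{G_i})$ to $Im(\sigma_i) = Im(\sigma^G_{G_i})$: one can use the identity $\sigma^G_{G_i}(x) = \sum_{\bar g \in G/G_{i-1}} \bar g \, \sigma^{G_{i-1}}_{G_i}(x) \, \bar g^{-1}$ together with the $G$-invariance of $\mathcal{E}_i - ver(\mathcal{E}_{i-1})$ to transfer membership between these ideals, but the cleanest route is probably to apply Theorem \ref{MWC} directly with $U = G$, $V = G_i$ and invert the resulting M\"{o}bius sum by induction on $i$ so as to isolate the contribution of $W = G_{i-1}$ and $W = G_i$.

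Granted step (C), Corollary \ref{corophitilde} supplies the desired $\mathcal{E} \in K_1'(A(G))$ with $\theta(\mathcal{E}) = s((\mathcal{E}_i)_i)$. For the constant term, observe that $q \mapsto 0$ extends to a ring homomorphism $A(G) \to \widehat{\Lambda(G)}_S$, and every structure used in Theorem \ref{theoremcongruences} (the map $\theta$, the operators $\sigma_i$, $ver$, $N_{i,j}$, $\pi_{i,j}$, $\eta_i$, and the section $s$) descends compatibly to its analog over $\widehat{\Lambda(G)}_S$. Hence the image of $\mathcal{E}$ under $K_1'(A(G)) \to K_1'(\widehat{\Lambda(G)}_S)$ is the unique element whose abelian components are the constant terms of the $\mathcal{E}_i$, namely $2^{-r_{F_i}}\zeta(K_i/F_i)$. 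By the construction of the non-commutative $p$-adic zeta function for totally real fields (\cite{Kakde:2013}, \cite{RitterWeiss:2010}), $\zeta(F_\infty/F)$ is precisely the element of $K_1'(\widehat{\Lambda(G)}_S)$ corresponding to this M\"{o}bius-Wall compatible tuple of abelian zeta functions, which yields the claimed identification.
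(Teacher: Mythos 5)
There is a genuine gap in the central step, and it is precisely the step you yourself flag as ``the principal obstacle.'' Your choice $U = G_{i-1}$, $V = G_i$ in Theorem \ref{MWC} does give the two-term collapse and hence $\mathcal{E}_i - ver(\mathcal{E}_{i-1}) \in Im(\sigma^{G_{i-1}}_{G_i})$, but the proposed transfer to $Im(\sigma_i) = Im(\sigma^G_{G_i})$ via $G$-invariance does not work for $i \geq 3$. Concretely, if $D = \sigma^{G_{i-1}}_{G_i}(y)$ and $D$ is $G$-invariant, then the decomposition you write down yields $\sigma^G_{G_i}(y) = [G:G_{i-1}]\, D = \delta p^{i-2} D$, and $p^{i-2}$ is not invertible in $\mathbb{Z}_p$, so you cannot conclude $D \in Im(\sigma^G_{G_i})$. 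Your alternate route (apply \ref{MWC} with $U = G$, $V = G_i$ and Möbius-invert) also does not deliver: $G/G_i$ is cyclic of order $\delta p^{i-1}$, so the Möbius sum runs over all squarefree divisors of $\delta p$, not merely two terms, and there is no clean inversion isolating $W = G_i$ and $W = G_{i-1}$.

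The paper's fix is to take $U = G_1$ (not $G_{i-1}$) and $V = G_i$ for every $i \geq 2$. Because $G_1/G_i$ is a cyclic $p$-group, $\mu$ still vanishes except on $W = G_i$ and $W = G_{i-1}$, so the Möbius sum again has exactly two terms; after applying $Res_{F_1/\mathbb{Q}}$ and $U_{\delta}$ one obtains $\mathcal{E}_i - ver(\mathcal{E}_{i-1}) \in Im(\sigma^{G_1}_{G_i})$, which is a \emph{smaller} ideal than $Im(\sigma^{G_{i-1}}_{G_i})$. Now the transfer via $G$-invariance goes through: $D = \sigma^{G_1}_{G_i}(y)$ and $G$-invariance give $\sigma^G_{G_i}(y) = [G:G_1]\,D = \delta D$, and since $\delta$ is prime to $p$ it is a unit in $\mathbb{Z}_p$, so $D = \sigma^G_{G_i}(\delta^{-1}y) \in Im(\sigma_i)$. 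So the key observation you are missing is that one must peel off \emph{all} the $p$-power index at once (go down to $G_1$) so that the remaining index $[G:G_1] = \delta$ is a $p$-adic unit; your incremental choice leaves a non-unit index in the way. The rest of your argument (the $G$-invariance of each $\mathcal{E}_i$, the appeal to Corollary \ref{corophitilde}, and the identification of the constant term via compatibility of $\theta$ and $s$ with $q \mapsto 0$) matches the paper's reasoning.
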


\begin{proof} It is clear from the explicit expression that each $\mathcal{E}_i$ is fixed under the conjugation action of $G$. The congruence condition (C) follows from the M\"{o}bius-Wall congruences as follows: taking $V= G_i$ and $U=G_1$ and noting that $\mu(\mathbb{Z}/p^n \mathbb{Z})$ is zero unless $0 \leq n \leq 1$, we get that the standard $q$-expansion of 
\begin{equation}
\label{equationres}
ver(Res_{F_{i-1}/F_1}(\mathcal{E}(K_{i-1}/F_{i-1}))|_{U_{p^{i-1}}}) - Res_{F_i/F_1}(\mathcal{E}(K_i/F_i))|_{U_{p^i}}
\end{equation}
lies in $Im(\sigma^{G_1}_{G_i})$ by theorem \ref{MWC}. Now $ver(\mathcal{E}_{i-1}) - \mathcal{E}_i$ is obtained by applying $Res_{F_1/\mathbb{Q}}$ and $U_{\delta}$ to (\ref{equationres}) and taking its standard $q$-expansion. Hence $ver(\mathcal{E}_{i-1}) - \mathcal{E}_i \in Im(\sigma^{G_1}_{G_i})$. Next we use that $\delta$ is an integer and a unit in $\mathbb{Z}_p$, and $ver(\mathcal{E}_{i-1}) - \mathcal{E}_i$ is fixed by the action of $G$. Hence $ver(\mathcal{E}_{i-1}) - \mathcal{E}_i \in Im(\sigma_i)$. Hence $(\mathcal{E}_i)_i$ lies in $\tilde{\Phi}$. The second assertion follows from corollary \ref{corophitilde}.

The last assertion follows from the commutative diagram 
\[
\xymatrix{
K_1'(A(G)) \ar[r] \ar[d]_{\theta} & K_1'(\widehat{\Lambda(G)}_S) \ar[d]^{\theta} \\
\prod_{i \geq 0} A(G_i^{ab})^{\times} \ar[r] & \prod_{i \geq 0} \widehat{\Lambda(G_i^{ab})}_S^{\times},
}
\]
where the horizontal maps are $q \mapsto 0$.
\end{proof}

The following corollary tells us something about evaluation of $\mathcal{E}$ at elements of $\mathcal{A}(G)$. 

\begin{corollary} \label{corollaryevaluation} Let $\rho \in \mathcal{A}(G)$. Then there exist $i$ and a one dimensional character $\chi$ of $G_i$ such that $\rho = Ind_{G_i}^G \rho$. Then 
\[
\rho(\mathcal{E}) = \prod_{j \geq i} \left( \chi(\eta_j(\mathcal{E}_j))^{\frac{1}{[\Gamma_j:\Gamma_i]}} \right).
\]
\end{corollary}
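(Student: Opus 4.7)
The plan is to reduce the evaluation of $\mathcal{E}$ at the irreducible Artin representation $\rho$ to the evaluation of $\chi$ at the $i$-th abelianized component $\theta_i(\mathcal{E})$, and then to substitute the explicit formula for the section $s$ produced in the proof of Theorem \ref{theoremcongruences} to obtain the desired product over $j \geq i$ of fractional powers of $\eta_j(\mathcal{E}_j)$.

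First I would establish the existence of $i$ and $\chi$: by the fact cited in the proof of Theorem \ref{theoremcongruences} as \cite[proposition 25]{Serre:representationtheory}, every absolutely irreducible finite-order representation of the solvable group $G = H \rtimes \Gamma$ is induced from a one-dimensional character of some open subgroup, and this subgroup can be taken to be some $G_i$ from the cofinal chain $\{G_i\}_{i \geq 0}$ (if necessary, after enlarging $i$). Next, the functoriality of the determinant map $Det$, equivalently of the reduced norm on $K_1'$, with respect to induction gives
\[
\rho(\mathcal{E}) \;=\; \chi(\theta_i(\mathcal{E})),
\]
where on the right $\chi$ is viewed as a character of $G_i^{ab}$ and $\theta_i$ is the composition of the norm map $K_1'(A(G)) \rightarrow K_1'(A(G_i))$ with the projection to $A(G_i^{ab})^{\times}$.

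Theorem \ref{theoremmain} then identifies $\theta(\mathcal{E})$ with $s((\mathcal{E}_j)_j)$, so $\theta_i(\mathcal{E})$ equals the $i$-th coordinate of the section $s$ applied to $(\mathcal{E}_j)_j \in \tilde{\Phi}$. The task thus reduces to unwinding the construction of $s$ given in the proof of Theorem \ref{theoremcongruences}. Using the multiplicativity of each $\eta_j$ and the relation $y_j = \mathcal{E}_j / ver(\mathcal{E}_{j-1})$ (with $\mathcal{E}_0=1$ after the permitted normalization), the product describing the $i$-th coordinate of the section should telescope into the form
\[
\theta_i(\mathcal{E}) \;=\; \prod_{j \geq i} \eta_j(\mathcal{E}_j)^{1/[\Gamma_j : \Gamma_i]} \;\in\; A(G_i^{ab})^{\times}.
\]
Applying $\chi$, interpreted via its restriction to each $G_j \subset G_i$, then yields the stated product formula.

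The principal obstacle is the bookkeeping of the exponents: one must carefully verify that, when the Oliver--Taylor integral logarithm is applied to extract fractional roots in $K_1'$, the passage from the depth-$0$ description of $s$ to its depth-$i$ projection produces exactly the exponent $1/[\Gamma_j : \Gamma_i]$ rather than some other power of $p$. Convergence of the infinite product in the $p$-adic topology on $A(G_i^{ab})^{\times}$ is automatic from the congruence condition (C) satisfied by $(\mathcal{E}_j)_j$, which forces the $p$-adic valuations of $\eta_j(y_j)$ to grow with $j$ and so makes the partial products Cauchy.
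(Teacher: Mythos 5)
Your high-level strategy is the right one: use the relation $\rho = \mathrm{Ind}_{G_i}^G\chi$ together with the compatibility of $Det$ (equivalently, the norm/transfer maps defining $\theta_i$) with induction to get $\rho(\mathcal{E})=\chi(\theta_i(\mathcal{E}))$, then invoke Theorem~\ref{theoremmain} to replace $\theta_i(\mathcal{E})$ with the $i$-th coordinate of $s\bigl((\mathcal{E}_j)_j\bigr)$ and compute that coordinate explicitly. The paper states this corollary without proof, so this is indeed the natural route.

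The genuine gap is the step you yourself flag as ``should telescope.'' The section $s$ constructed in the proof of Theorem~\ref{theoremcongruences} is expressed as a product of fractional powers of $\eta_j(y_j)$, with $y_j = x_j/ver(x_{j-1})$, whereas the corollary's product is over $\eta_j(\mathcal{E}_j)$. Turning one into the other is not a formal telescoping: you would need a commutation identity of the form $\eta_j\bigl(ver(x)\bigr)=ver\bigl(\eta_{j-1}(x)\bigr)$ (or a suitable variant compatible with the exponents), and such an identity is not obvious because $\eta_j$ is built from a character $\omega_j$ of $\Gamma_j$ while $ver:\Gamma_{j-1}\to\Gamma_j$ is the $p$-th power map, so $\omega_j\circ ver$ is trivial rather than equal to $\omega_{j-1}$. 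You should either verify this identity or show the telescoping fails and the corollary's $\mathcal{E}_j$ really must be read as $y_j$. Moreover, the normalisation $x_0=1$ that you invoke is not harmless: it is achieved by dividing the whole tuple by $\theta_j(\alpha)$ for some $\alpha\in K_1(A(G))$ lifting $\mathcal{E}_0$, which changes every $\mathcal{E}_j$, and the contribution of $\alpha$ must be reinstated at the end. Finally, the exponent discrepancy you note is real: the paper's displayed formula for $z_i$ has exponent $1/p^{j-i+1}$, which is $1/(p[\Gamma_j:\Gamma_i])$, not $1/[\Gamma_j:\Gamma_i]$; reconciling this factor of $p$ (presumably absorbed into $\eta_j$ or $\chi$) is precisely the bookkeeping that has to be carried out, and leaving it unresolved leaves the proof incomplete. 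The convergence remark is fine in spirit but should be phrased additively (the congruence (C) bounds $\eta_j(y_j)-1$, or equivalently the integral logarithm of $\eta_j(y_j)$, in $p$-adic valuation).
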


\begin{remark} There are many examples of the totally real extensions of $\mathbb{Q}$ with Galois group $G$ whose form is as in the previous section. For example if $p$ is an irregular prime, then the maximal abelian pro-$p$ extension of $\mathbb{Q}(\mu_{p^{\infty}})^+$ (the maximal totally real subfield of $\mathbb{Q}(\mu_{p^{\infty}})$ is isomorphic to $\mathbb{Z}_p^d$ for some positive integer $d$. If Vandiver's conjecture is true for $p$ then the action of $Gal(\mathbb{Q}(\mu_{p^{\infty}})^+/\mathbb{Q})$ on $\mathbb{Z}_p^d$ is diagonal. 
\end{remark}

\begin{remark} Even though the modifications we have to make to the $\Lambda$-adic Eisenstein series to get a lift to the non-commutative case are somewhat complicated, the result is rather formal i.e. if a ``$q$-expansion" satisfies a congruence, then its modification given  above can be lifted. The congruences seem to hold for Eisenstein series over other groups (see for example \cite{Bouganis:constantterm}). Hence their modifications should also have a lift. Is there a more conceptual description of these lifts? 
\end{remark}

\begin{remark} We also remark that $(\mathcal{E}_i)_i$ does not lie in $\Phi$ because they do not satisfy the functoriality condition (F). 
\end{remark}

\bibliographystyle{plain}
\bibliography{mybib2}

\begin{thebibliography}{10}

\bibitem{Bouganis:constantterm}
{Bouganis, T.}
\newblock {Non-abelian $p$-adic $L$-functions and Eisenstein series of unitary
  groups; the constant term method}.
\newblock In preparation.

\bibitem{BurnsKakde:algebraic}
{Burns, D. and Kakde, M.}
\newblock {Congruences in non-commutative Iwasawa theory}.
\newblock In preparation.

\bibitem{CFKSV:2005}
{Coates, J. and Fukaya, T. and Kato, K. and R. Sujatha and Venjakob, O.}
\newblock The ${GL_2}$ main conjecture for elliptic curves without complex
  multiplication.
\newblock {\em Publ. Math. IHES}, (1):163--208, 2005.

\bibitem{DeligneRibet:1980}
{Deligne, P and Ribet, Kenneth A.}
\newblock Values of abelian {L}-functions at negative integers over totally
  real fields.
\newblock {\em Inventiones Math.}, 59:227--286, 1980.

\bibitem{FerreroWashington:1979}
{Ferrero, B. and Washington, L.C.}
\newblock The {I}wasawa invariant $\mu_p$ vanishes for abelian number fields.
\newblock {\em Ann. of Math.}, 109:377--395, 1979.

\bibitem{FukayaKato:2006}
{Fukaya, T and Kato, K}.
\newblock A formulation of conjectures on p-adic zeta functions in
  non-commutative {I}wasawa theory.
\newblock In N.~N. Uraltseva, editor, {\em Proceedings of the St. Petersburg
  Mathematical Society}, volume~12, pages 1--85, March 2006.

\bibitem{HawkesIsaacsOzaydin:1989}
{Hawkes, T. and Isaacs, I.M. and \"{O}Zaydin, M.}
\newblock {On the M\"{o}bius function of a finite group}.
\newblock {\em {Rocky Mountain Journal of Mathematics}}, 19(4):1003--1034,
  1989.

\bibitem{Kakde:2011}
{Kakde, M.}
\newblock {Proof of the main conjecture of noncommutatve Iwasawa theory for
  totally real number fields in certain cases}.
\newblock {\em J. Algebraic Geom.}, 20:631--683, 2011.

\bibitem{Kakde:2013}
{Kakde, Mahesh}.
\newblock {The main conjecture of {I}wasawa theory for totally real fields}.
\newblock {\em {Invent. Math.}}, 193(3):539--626, 2013.

\bibitem{Kato:2006}
{Kato, K.}
\newblock Iwasawa theory of totally real fields for {G}alois extensions of
  {H}eisenberg type.
\newblock Very preliminary version, 2006.

\bibitem{Katz:1978}
{Katz, N. M.}
\newblock p-adic ${L}$-functions for {CM} fields.
\newblock {\em Inventiones Math.}, 49:199--297, 1978.

\bibitem{Oliver:1988}
{Oliver, R.}
\newblock {\em Whitehead Groups of Finite Groups}.
\newblock Number 132 in London Mathematical Society Lecture Note Series.
  Cambridge University Press, 1988.

\bibitem{RitterWeiss:congruences}
{Ritter, J. and Weiss, A.}
\newblock Congruences between abelian pseudomeasures.
\newblock {\em Mathematical Research Letters}, 15(4):715--725, July 2008.

\bibitem{RitterWeiss:2010}
{Ritter, J. and Weiss, A.}
\newblock On the `main conjecture' of equivariant {I}wasawa theory.
\newblock {\em Journal of the AMS}, 24:1015--1050, 2011.

\bibitem{serre:1973}
{Serre, J-P.}
\newblock Formes modulaires et fonctions z{\^e}ta $p$-adiques.
\newblock In {\em Modular functions of one variable, III}, volume LNM 350,
  pages 191--268, 1973.

\bibitem{Serre:representationtheory}
{Serre, J-P}.
\newblock {\em Linear Representation of Finite Groups}.
\newblock Number~42 in {Graduate Text in Mathematics}. Springer-Verlag, 1977.

\bibitem{SkinnerUrban:2014}
{Skinner, Christopher and Urban, Eric}.
\newblock {The Iwasawa Main Conjecture for $GL_2$}.
\newblock {\em {Invent. Math.}}, 195:1--277, May 2014.

\bibitem{Wiles:1990}
{Wiles, A}.
\newblock The {I}wasawa conjecture for totally real fields.
\newblock {\em Ann. of Math.}, 131(3):493--540, 1990.

\end{thebibliography}

\end{document}